\newfont{\footsc}{cmcsc10 at 8truept}
\newfont{\footbf}{cmbx10 at 8truept}
\newfont{\footrm}{cmr10 at 10truept}
\renewcommand{\ps@plain}{%
\renewcommand{\@oddfoot}{\footsc {\footbf }  \footrm\thepage}}
\makeatother \pagestyle{plain} \leftmargin=25mm
\newtheorem{thm}{Theorem}[section]
\newtheorem{lem}[thm]{Lemma}
\newtheorem{prob}{Problem}
\theoremstyle{definition}
\newtheorem{defn}[thm]{Definition}
\newtheorem{case}{Case}
\theoremstyle{remark}
\newtheorem{rem}[thm]{Remark}
\title{Classification and enumeration of lattice polygons in a disc\footnotemark[2]\footnotetext[2]{This work is supported by the National Natural Science Foundation of China (NSFC12226006, NSFC11921001) and the National Key
Research and Development Program of China (2018-YFA0704701).}}
\date{}
\author{Qiuyue Liu\textsuperscript{1}, Yuqin Zhang\textsuperscript{2} and Zhanyuan Cai\thanks{Corresponding author. E-mail address: 18976257573@163.com.}}
\begin{document}
	\maketitle
\begin{center}\vskip -0.8cm
		{\small Center for Applied Mathematics,
			Tianjin University, 300072, Tianjin, China
			\vskip 0.3cm
		}
	\end{center}
\begin{abstract}
In 1980, V. I. Arnold studied the classification problem for convex lattice polygons of given area. Since then, this problem and its analogues have been studied by many authors, including $\mathrm{B\acute{a}r\acute{a}ny}$, Lagarias, Pach, Santos, Ziegler and Zong. Recently, Zong proposed two computer programs to prove Hadwiger's covering conjecture and Borsuk's partition problem, respectively, based on enumeration of the convex lattice polytopes contained in certain balls. For this purpose, similar to $\mathrm{B\acute{a}r\acute{a}ny}$ and Pach's work on volume and Liu and Zong's work on cardinality, we obtain bounds on the number of non-equivalent convex lattice polygons in a given disc. Furthermore, we propose an algorithm to enumerate these convex lattice polygons.
\\[3mm]
\textbf{Keywords}:\  Lattice polygons, classification, enumeration
\\[3mm]
\textbf{2010 MSC}:\  52B20, 52C07
\end{abstract}

\maketitle
\section{Introduction}
Convex lattice polytopes in $\mathbb{E}^{d}$ are convex hulls of finite subsets of the integer lattice $\mathbb{Z}^{d}$. Alternatively, they can be described as the intersections of convex bodies and $\mathbb{Z}^{d}$, which is the definition used in this paper for convenience. As usual, let $P$ denote a $d$-dimensional convex lattice polytope, let $|P|$ denote the cardinality of $P$, let $f_{0}(P)$ denote the number of vertices of $P$, and let $v(P)$ denote the volume of $P$. For further information on polytopes and lattice polytopes, we can refer to \cite{Gruber} and \cite{Ziegler}.

Consider two $d$-dimensional convex lattice polytopes, denoted by $P_{1}$ and $P_{2}$. If there exists a $\mathbb{Z}^{d}$-preserving unimodular transformation $\sigma$ such that $\sigma(P_{1})=P_{2}$, we say that $P_{1}$ and $P_{2}$ are \emph{unimodularly equivalent}, denoted by $P_{1}\thicksim P_{2}$. It is easy to see that, if $P_{1}\thicksim P_{2}$ and $P_{2}\thicksim P_{3}$, then we have $P_{1}\thicksim P_{3}$. In addition, if $P_{1}\thicksim P_{2}$, then we have
\begin{align*}
    |P_{1}|=|P_{2}|,\ f_{0}(P_{1})=f_{0}(P_{2}),\ v(P_{1})=v(P_{2}).
\end{align*}
When two vertices are the endpoints of the same side in a lattice polytope, they are said to be adjacent. Therefore, if $P_{1}\thicksim P_{2}$ and $\sigma$ is the unimodular transformation, then $\sigma$ preserves both adjacency and non-adjacency between vertices. Furthermore, if the unimodular transformation $\sigma$ maps $v^{(i)}_{1}$ to $v^{(i)}_{2}$, where $v^{i}_{1}$ and $v^{i}_{2}$ are vertices of $P_{1}$ and $P_{2}$ respectively, then the volume of the convex hull formed by $v^{(1)}_{i}$ and its adjacent vertices is the same as that of the convex hull formed by $v^{(2)}_{i}$ and its adjacent vertices. Moreover, when $P_{1}\thicksim P_{2}$, for any side $S$ of $P_1$, the number of lattice points on $S$ is the same as that on $\sigma(S)$.

Clearly, convex lattice polytopes can be divided into different classes based on the equivalence relation $\thicksim$. By triangulations, it is easy to show that
\begin{equation}
    d!\cdot v(P)\in \mathbb{Z}
\end{equation}
holds for any $d$-dimensional convex lattice polytope $P$. Let $v(d,m)$ denote the number of different classes of $d$-dimensional convex lattice polytopes $P$ with $v(P)=m/d!$, where both $d$ and $m$ are positive integers. Let $f(d,m)$ and $g(d,m)$ be functions of positive integers $d$ and $m$. In this paper, $f(d,m)\ll g(d,m)$ means that
\begin{equation}
    f(d,m)\leqslant c_{d}\cdot g(d,m)
\end{equation}
holds for all positive integers $m$ and a fixed positive integer $d$, where $c_{d}$ is a constant that depends only on $d$.

In 1980, Arnold\cite{arnol1980statistics} studied the values of $v(2,m)$ and proved that
\begin{align}
    m^{\frac{1}{3}}\ll \log v(2,m)\ll  m^{\frac{1}{3}}\log m,
\end{align}
holds for sufficiently large $m$.  In 1991, Lagarias and Ziegler \cite{Lagarias} proved that any lattice polytope with volume at most $V$ is unimodularly equivalent to a lattice polytope contained in a lattice cube of side length at most $n\cdot n!V$, implying that the number of equivalence classes of lattice polytopes of bounded volume is finite. In 1992, $\mathrm{B\acute{a}r\acute{a}ny}$ and Pach \cite{barany} improved Arnold’s upper bound to 
\begin{align}\label{improvedjie}
\log v(2,m)\ll m^{\frac{1}{3}}. 
\end{align}
Furthermore, $\mathrm{B\acute{a}r\acute{a}ny}$ and Vershik \cite{Vershik} obtained a general upper bound 
\begin{align}
    \log v(d,m)\ll m^{\frac{d-1}{d+1}},
\end{align}
for any fixed $d$ and sufficiently large $m$.

Let $\kappa(d,w)$ denote the number of different classes of $d$-dimensional convex lattice polytopes $P$ with $|P|=w$. In 2011, Liu and Zong \cite{liu2011} studied  the classification problem for convex lattice polytopes of given cardinality by proving
\begin{equation}
    \begin{gathered}
        w^{\frac{1}{3}}\ll \log \kappa(2,w)\ll  w^{\frac{1}{3}},\\
        \kappa(d,w)=\infty,\ if\ w\geqslant d+1\geqslant 4.
    \end{gathered}
\end{equation}
This paper also obtains some relevant results of Arnold’s problem for centrally symmetric lattice polygons.

In \cite{Aliev}, I. Aliev, J. A. De Loera and Q. Louveaux presented six contributions to the study of $k$-feasibility for the semigroup $Sg(A)=\{\mathbf{b}: A\mathbf{x}=\mathbf{b}, x\in\mathbb{Z}^{n}, \mathbf{x}\geqslant \mathbf{0}\}$, where $A$ is an integral $d\times n$ matrix, and the associated polyhedral geometry. In fact, the $k$-feasibility questions are special cases (for parametric polyhedra of the form $\{\mathbf{x}: A\mathbf{x} = \mathbf{b}, \mathbf{x}\geqslant \mathbf{0}\}$) of the problem of classifying polyhedra with $k$ lattice points.

In addition to the above results about the number of classes, several authors have focused on enumerating $d$-dimensional convex lattice polytopes under particular conditions. S. Rabinowitz \cite{Rabinowitz} enumerated all representative elements of convex lattice polygons with at most one interior lattice point. M. Blanco and F. Santos \cite{Santos1} proved the finiteness result in dimension three for polytopes of width larger than one and a fixed number of lattice points, and performed an explicit enumeration up to $11$ lattice points \cite{Santos, Blanco}. In 2021, B. Gabriele\cite{Gabriele} developed an algorithm to completely enumerate representative elements of $d$-dimensional convex lattice polytopes with a volume at most $m$. Additionally, J.A. De Loera discussed algorithms and software for enumerating all lattice points inside a rational convex polytope in \cite{De Loera}, while in \cite{De Loera1}, he provided a survey for the problem of counting the number of lattice points inside a polytope and its applications.

From the above studies, it can be seen that the classification and enumeration of convex lattice polytopes have been considered only under conditions about the structure of convex lattice polytopes themselves, such as interior, volume, cardinality, width, etc. However, to our knowledge, few studies have focused on the classification and enumeration of convex lattice polytopes that are restricted to a given region, namely, the convex lattice polytopes formed by lattice points in a given region. This is a crucial and valuable area of research for the following reasons.

In 1957, Hadwiger \cite{Hadwiger} proposed a conjecture that every $n$-dimensional convex body can be covered by $2^n$
translates of its interior. This conjecture, known as Hadwiger's conjecture, is still open for all $n\geqslant 3$. In 1933, Borsuk \cite{Borsuk} put forth a conjecture that every $n$-dimensional bounded set can be divided into $n+1$ subsets of smaller diameters, which is called Borsuk's conjecture. In 2010, Zong\cite{C. Zong2} introduced a four-step quantitative program to tackle Hadwiger's conjecture. In 2021, Zong\cite{C. Zong1} developed a computer proof program to deal with Borsuk’s conjecture, based on a novel reformulation. Both of Zong's programs rely on the enumeration of convex lattice polytopes $P$ contained in certain balls.

In addition, the complexity analysis of the algorithm for enumerating all representative elements of convex lattice polytopes within a given region can contribute to determining whether this enumeration problem is NP-hard or NP-complete. This could lead to the discovery of new lattice-based NP-hard or NP-complete problems, which would be of great significance in the field of lattice cryptography.

In this paper, we focus on convex lattice polytopes in the 2-dimensional case, namely, convex lattice polygons. Based on the above motivations, in section 2, we study the number of different classes of convex lattice polygons within a given closed disc of radius $R$ centered at the origin, denoted as $B_{R}$.
Specifically, let $\kappa(R)$ denote the number of different classes of convex lattice polygons in $B_{R}$, and we prove
\begin{align*}
    R^{2/3}\ll \log \kappa(R)\ll R^{2/3}.
\end{align*}
 This result is analogous to the results of $\mathrm{B\acute{a}r\acute{a}ny}$ and Pach \cite{barany}, as well as Liu and Zong \cite{liu2011}.
 
 In section $3$ we present an algorithm to enumerate representative elements of convex lattice polygons in $B_{R}$ and obtain the number of different classes when $R=2$, 3 and 4. Notably, this algorithm is effective for any other regions besides the disc in the 2-dimensional case. Moreover, this algorithm can contribute to the computer proof in \cite{C. Zong1}.
 
 In section 4 we analyze the experimental results obtained from the algorithm presented in section 3. Based on our observations, we propose two open problems aimed at enhancing our comprehension of the number of representative elements in $B_{R}$. 
 
In section 5 we analyze the complexity of the algorithms. This contributes to determining whether the problem of finding all non-equivalent convex lattice polygons in $B_{R}$ is NP-hard or NP-complete with respect to $R$, and whether the problem of finding all non-equivalent convex lattice polytopes in $\alpha B^{n}$ is is NP-hard or NP-complete with respect to $n$, where $B^{n}$ is the $n$-dimensional unit ball centered at the origin in $\mathbb{E}^{n}$.

\section{Classification of Convex Lattice Polygons in a disc}
Recall that $\kappa(R)$ denotes the number of all different classes of convex lattice polygons in $B_{R}$. The main task of this section is to prove the following theorem.
\begin{thm}\label{bound}
	When $R$ is sufficiently large, we have
	\begin{equation}
		c_1R^{2/3}\leqslant \log \kappa(R)\leqslant c_2R^{2/3},
	\end{equation}
	for some suitable $c_1,\ c_2>0$. Namely, we have
	\begin{equation}
		R^{2/3}\ll \log \kappa(R)\ll R^{2/3}.
	\end{equation}
\end{thm}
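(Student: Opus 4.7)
The plan is to prove the two bounds by very different methods. The upper bound follows almost immediately from the B\'ar\'any--Pach estimate (\ref{improvedjie}): every equivalence class counted by $\kappa(R)$ has a representative $P\subseteq B_R$, so $m:=2v(P)$ is a positive integer with $m\leq 2\pi R^2$, and the class contributes to $v(2,m)$ for that $m$. Summing gives
\[
\kappa(R)\leq \sum_{m=1}^{\lfloor 2\pi R^2\rfloor}v(2,m)\leq 2\pi R^2\cdot e^{C(2\pi R^2)^{1/3}}\leq e^{c_2R^{2/3}}
\]
once $R$ is large enough to absorb the polynomial factor.

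For the lower bound I would exhibit an explicit family of inequivalent polygons. Set $r=\lfloor \alpha R^{1/3}\rfloor$ for a small constant $\alpha>0$, and let $\mathcal P$ denote the set of primitive lattice vectors $(p,q)$ with $1\leq p,q\leq r$ and $\gcd(p,q)=1$; a standard Euler totient estimate gives $|\mathcal P|\geq \beta r^2$ for an absolute constant $\beta>0$. For each $S\subseteq \mathcal P$, build a centrally symmetric convex lattice polygon $Q_S$ whose edge vectors, in cyclic order of argument, consist of the four axial unit vectors $\pm(1,0),\pm(0,1)$ together with the vectors of $S$ and their $90^{\circ}$, $180^{\circ}$ and $270^{\circ}$ rotations. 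Since $\sum_{v\in\mathcal P}|v|=O(r^3)$, the diameter of $Q_S$ is $O(r^3)$; choosing $\alpha$ small enough guarantees $Q_S\subseteq B_R$ after a suitable translation.

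The hard part will be showing that these $2^{|\mathcal P|}$ polygons fall into at least $2^{|\mathcal P|(1-o(1))}$ equivalence classes. Suppose $Q_{S'}=AQ_S+b$ for some $A\in GL_2(\mathbb Z)$ and $b\in\mathbb Z^2$. Then $A$ sends each edge vector of $Q_S$ to one of $Q_{S'}$; in particular the images $A(1,0)$ and $A(0,1)$, which are the two columns of $A$, are edge vectors of $Q_{S'}$ and hence have $\ell^{\infty}$-norm at most $r$. Thus $A$ lies in a set of size $O(r^4)$, so each equivalence class contains at most $O(r^4)$ of our polygons, yielding
\[
\kappa(R)\geq \frac{2^{|\mathcal P|}}{O(r^4)}\geq e^{(\beta\log 2)r^2-O(\log r)}\geq e^{c_1R^{2/3}}.
\]
I expect the orbit-size bound via the axial edges and the careful diameter estimate for $Q_S$ to be the steps requiring the most care; everything else is a chain of routine estimates.
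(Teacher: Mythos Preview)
Your argument is correct; both bounds go through as you outline. The upper bound is identical to the paper's. For the lower bound you take a genuinely different route from the authors, and it is worth recording the contrast.

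The paper fixes a single ``template'' polygon $2M_\tau$ whose short sides are the doubled primitive vectors in a half-disc of radius~$\tau$, and builds $2^{|V_\tau|-1}$ subpolygons $Q_{\mathbf u}$ by choosing, on each short side $S_i$, whether the vertex lies at the midpoint or the endpoint. The delicate step is their Theorem~\ref{noequi}: a case analysis showing that any unimodular map between two such $Q_{\mathbf u}$'s must send the long diameter to itself, which forces the map to be the identity or a single reflection; hence each class contains at most two of the $Q_{\mathbf u}$. A separate lemma then controls the diameter of $2M_\tau$ in terms of $\tau$ to embed everything in $B_R$.

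Your construction instead varies the \emph{edge set} directly: for each $S\subseteq\mathcal P$ you form a centrally symmetric polygon whose edges are the axial units together with $S$ and its three rotations. Because $(1,0)$ and $(0,1)$ are always edges, any unimodular $A$ realising $Q_{S'}\sim Q_S$ has both columns equal to edge vectors of $Q_{S'}$, hence of $\ell^\infty$-norm at most~$r$; this gives the orbit bound $O(r^4)$ with no case analysis at all. The price is a slightly worse implied constant (you divide by $O(r^4)$ rather than by~$2$), but this is invisible at the level of $\log\kappa(R)\asymp R^{2/3}$. One small point to make explicit when you write it up: the translation placing $Q_S$ inside $B_R$ must be by an integer vector so that $Q_S$ remains a lattice polygon; since $\mathrm{diam}(Q_S)=O(r^3)=O(\alpha^3 R)$, Jung's theorem plus rounding the centre to the nearest lattice point handles this once $\alpha$ is small enough.
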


Before proving Theorem \ref{bound}, we first, as done in \cite{liu2011}, construct convex lattice polygons $M_\tau$ and $Q_i$, $i=1,\cdots,2^{\left|V_\tau\right|-1}$. The idea of this construction was originally derived from Arnold \cite{arnol1980statistics}.

Let $\tau$ be a large number, let $V_{\tau}$ denote the set of all primitive integer vectors in the semicircle $\{(x,y):\ x^2+y^2\leqslant \tau^2,\ x>0\}$. Let $M_\tau$ be the convex lattice polygon whose oriented sides are all integral vectors in $V_\tau$ and $-\sum_{v\in V_\tau}v$, as shown in Figure \ref{vtauandmtau}.

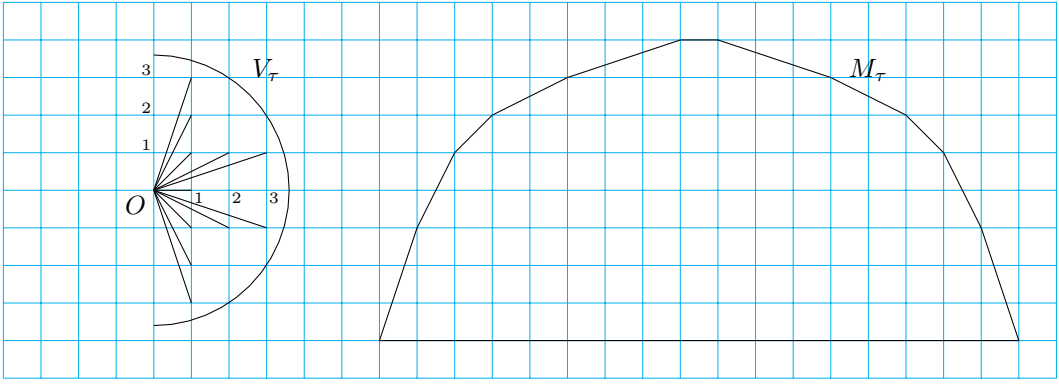
\begin{figure}
\centering
\begin{tikzpicture}
	\draw[step=.5cm,cyan,very thin] (-7,-2.5) grid (7,2.5);
	\draw [domain=-90:90] plot ({-5+1.8*cos(\x)}, {1.8*sin(\x)});
	\path (-5.24,-0.2) node (o) {$O$};
	\path[font=\fontsize{6}{6}\selectfont] (-4.4,-0.1) node (a) {1};
	\path[font=\fontsize{6}{6}\selectfont] (-3.9,-0.1) node (b) {2};
	\path[font=\fontsize{6}{6}\selectfont] (-3.4,-0.1) node (c) {3};
	
	\path[font=\fontsize{6}{6}\selectfont] (-5.1,0.6) node (a') {1};
	\path[font=\fontsize{6}{6}\selectfont] (-5.1,1.1) node (b') {2};
	\path[font=\fontsize{6}{6}\selectfont] (-5.1,1.6) node (c') {3};
	
	\draw[shift={(-1,0)}] (-4,0) -- (-3.5,0);
	\draw[shift={(-1,0)}] (-4,0) -- (-3.5,0.5);
	\draw[shift={(-1,0)}] (-4,0) -- (-3.5,1);
	\draw[shift={(-1,0)}] (-4,0) -- (-3.5,1.5);
	\draw[shift={(-1,0)}] (-4,0) -- (-3,0.5);
	\draw[shift={(-1,0)}] (-4,0) -- (-2.5,0.5);
	
	\draw[shift={(-1,0)}] (-4,0) -- (-3.5,-0.5);
	\draw[shift={(-1,0)}] (-4,0) -- (-3.5,-1);
	\draw[shift={(-1,0)}] (-4,0) -- (-3.5,-1.5);
	\draw[shift={(-1,0)}] (-4,0) -- (-3,-0.5);
	\draw[shift={(-1,0)}] (-4,0) -- (-2.5,-0.5);
	
	\path[font=\fontsize{10}{10}\selectfont] (-3.5,1.6) node (vtau) {$V_{\tau}$};
	
	\draw[shift={(6,2)}] (-4,0) -- (-3.5,0);
	\draw[shift={(3,0.5)}] (-4,0) -- (-3.5,0.5);
	\draw[shift={(2.5,-0.5)}] (-4,0) -- (-3.5,1);
	\draw[shift={(2,-2)}] (-4,0) -- (-3.5,1.5);
	\draw[shift={(3.5,1)}] (-4,0) -- (-3,0.5);
	\draw[shift={(4.5,1.5)}] (-4,0) -- (-2.5,0.5);
	
	\draw[shift={(9,1)}] (-4,0) -- (-3.5,-0.5);
	\draw[shift={(9.5,0.5)}] (-4,0) -- (-3.5,-1);
	\draw[shift={(10,-0.5)}] (-4,0) -- (-3.5,-1.5);
	\draw[shift={(8,1.5)}] (-4,0) -- (-3,-0.5);
	\draw[shift={(6.5,2)}] (-4,0) -- (-2.5,-0.5);
	\draw (-2,-2) -- (6.5,-2);
	
	\path[font=\fontsize{10}{10}\selectfont] (4.5,1.6) node (mtau) {$M_{\tau}$};
\end{tikzpicture}
\caption{Examples of $V_\tau$ and $M_\tau$}
\label{vtauandmtau}
\end{figure}



It is well known in number theory that
\begin{equation}\label{vtaushu}
	\left|V_\tau\right|=\frac{3}{\pi}\tau^2+O(\tau\log\tau).
\end{equation}
 We list some properties of the convex lattice polygon $M_\tau$ as follows:
\begin{enumerate}
	\item It has $\left|V_\tau\right|+1$ vertices.
	\item\label{property2} Apart from the diameter, each side of $M_\tau$ contains no other lattice points except the vertices.
    \item\label{property3} Let $r$ be the largest radius of semicircles contained in $M_{\tau}$ and let $r'$ be the smallest radius of semicircles containing $M_{\tau}$. By ($\ref{vtaushu}$) it can be deduced that
    \begin{align*}
        \tau^{3}\ll r\leqslant r'\ll \tau^{3}.
    \end{align*}
\end{enumerate}

Now we consider the convex lattice polygon $2M_\tau$. Let $S_1$, $S_2$, $\cdots$, $S_{\left|V_\tau\right|}$ be the short sides of $2M_\tau$ in the clockwise order. By property \ref{property2}, for each $i$, $S_i$ contains three lattice points, which are denoted by $\mathbf{q}_i^0$, $\mathbf{q}_i^1$ and $\mathbf{q}_i^2$ respectively. Then we have  $\mathbf{q}_1^0=(0,0)$, $\mathbf{q}_i^0=\mathbf{q}_{i-1}^2$ and $\mathbf{q}_{\left|V_\tau\right|}^2=2\sum_{\mathbf{v}\in V_\tau}\mathbf{v}$. For each $\mathbf{u}=(i_{1},i_{2},...,i_{\left|V_{\tau}\right|-1}) \in \{1,2\}^{\left|V_{\tau}\right|-1}$, we get a convex lattice polygon
\begin{equation*}
	Q_\mathbf{u}=\mathrm{conv}\{\mathbf{q}^{0}_{1},\mathbf{q}^{i_{1}}_{1},\mathbf{q}^{i_{2}}_{2},\cdots,\mathbf{q}^{i_{|V_{\tau}|-1}}_{|V_{\tau}|-1},\mathbf{q}^{2}_{\left|V_{\tau}\right|}\}.
\end{equation*}
Obviously when $\mathbf{u,v}\in \{1,2\}^{|V_{\tau}|-1}$ and $\mathbf{u}\neq \mathbf{v}$, $Q_\mathbf{u}$ and $Q_\mathbf{v}$ are different. Let $\mathcal{Q}$ be the set of these polygons $Q_\mathbf{u}$, $\mathbf{u}\in \{1,2\}^{|V_{\tau}|-1}$. It is obvious that $\left|\mathcal{Q}\right|=2^{\left|V_\tau\right|-1}$, and we have the following theroem.

\begin{thm}\label{noequi}
    For any $Q_\mathbf{u}\in \mathcal{Q}$, there is at most another $Q_\mathbf{v}\in \mathcal{Q}$ unimodularly equivalent to $Q_\mathbf{u}$.
\end{thm}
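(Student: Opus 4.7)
The idea is to identify any unimodular equivalence $\sigma\colon Q_{\mathbf{u}}\to Q_{\mathbf{v}}$ with an essentially unique explicit map, so that $Q_{\mathbf{v}}$ is forced to be either $Q_{\mathbf{u}}$ itself or a fixed reflection of $Q_{\mathbf{u}}$. First I would observe that the bottom side of $Q_{\mathbf{u}}$, lying on the diameter of $2M_{\tau}$ and running from $(0,0)$ to $(2L,0)$, is distinguished among the sides of $Q_{\mathbf{u}}$ by its lattice-point count: by property~\ref{property3}, $2L$ is of order $\tau^{3}$, so the bottom carries $2L+1$ lattice points (of order $\tau^{3}$), whereas every other side of $Q_{\mathbf{u}}$ is a chord between chosen lattice points on short sides $S_{k}$ of $2M_{\tau}$ (each of length at most $2\tau$) and hence carries at most $O(\tau)$ lattice points. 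Since a unimodular equivalence preserves the lattice-point count on each side, $\sigma$ must map the bottom of $Q_{\mathbf{u}}$ to the bottom of $Q_{\mathbf{v}}$.

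Because both bottoms coincide with the segment $[(0,0),(2L,0)]$, $\sigma$ restricts to an affine self-map of this segment and either fixes or swaps its endpoints. Writing $\sigma(x)=Ax+c$ with $A\in GL_{2}(\mathbb{Z})$ and using that both polygons lie in the same half-plane bounded by the $x$-axis, I would obtain exactly two families of candidates,
\begin{equation*}
\sigma(x,y)=(x+ay,\,y)\qquad\text{or}\qquad\sigma(x,y)=(2L-x+ay,\,y),\qquad a\in\mathbb{Z},
\end{equation*}
and then pin down $a$ by tracking the vertex of $Q_{\mathbf{u}}$ adjacent to $(0,0)$, namely $\mathbf{q}_{1}^{i_{1}}=i_{1}v_{1}$, where $v_{1}=(1,\pm N_{1})$ is the extreme element of $V_{\tau}$ with $|N_{1}|$ of order $\tau$. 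Adjacency, together with the lattice-point-count argument (ruling out the case that the short side from $(0,0)$ to $\mathbf{q}_{1}^{i_{1}}$ maps onto the long bottom), forces $\sigma(\mathbf{q}_{1}^{i_{1}})$ to be the upper neighbor of $\sigma((0,0))$ in $Q_{\mathbf{v}}$: the vertex $\mathbf{q}_{1}^{i'_{1}}$ in the shear form, or $\mathbf{q}_{|V_{\tau}|-1}^{i'_{|V_{\tau}|-1}}$ in the reflection form. In the shear form, matching $y$-coordinates gives $i_{1}=i'_{1}$ and matching $x$-coordinates reduces to $i_{1}aN_{1}=0$, hence $a=0$ and $\sigma=\mathrm{id}$, so $\mathbf{u}=\mathbf{v}$. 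In the reflection form, using the symmetries $x_{j}=x_{n+1-j}$ and $y_{j}=-y_{n+1-j}$ of $V_{\tau}$ (with $n=|V_{\tau}|$) to write the coordinates of $\mathbf{q}_{|V_{\tau}|-1}^{i'_{|V_{\tau}|-1}}$ explicitly and then matching both coordinates again forces $a=0$. Thus $\sigma$ must be the pure reflection $\sigma_{0}(x,y)=(2L-x,\,y)$, and $Q_{\mathbf{v}}=\sigma_{0}(Q_{\mathbf{u}})$ is uniquely determined by $\mathbf{u}$.

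Combining the two cases, the only element of $\mathcal{Q}\setminus\{Q_{\mathbf{u}}\}$ that can be unimodularly equivalent to $Q_{\mathbf{u}}$ is $\sigma_{0}(Q_{\mathbf{u}})$ (in the cases where this polygon actually lies in $\mathcal{Q}$), proving the claim. The main technical obstacle will be the ``$a=0$'' step: the extremality of $v_{1}$, with $|N_{1}|$ of order $\tau$, must yield $x$-coordinate information sharp enough to exclude every nonzero integer shear, and I would handle it using the exact positions of the lattice points on $S_{1}$ and, by the symmetry of $V_{\tau}$, on $S_{|V_{\tau}|-1}$.
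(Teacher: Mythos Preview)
Your plan is correct and follows essentially the same route as the paper's own proof: distinguish the bottom side by its lattice-point count, deduce that $\sigma$ either fixes or swaps the endpoints $(0,0)$ and $(2L,0)$, reduce $A$ to a shear $\begin{pmatrix}1&a\\0&1\end{pmatrix}$ or a reflected shear $\begin{pmatrix}-1&a\\0&1\end{pmatrix}$ (using that both polygons lie in the upper half-plane), and then kill the shear parameter $a$ by examining the vertex adjacent to the origin. The only cosmetic difference is in this last step: the paper simply observes that the two candidate points $\mathbf{q}_{1}^{1},\mathbf{q}_{1}^{2}$ on $S_{1}$ have distinct $y$-coordinates while the shear preserves $y$-coordinates, whereas you compute the coordinates explicitly and solve $i_{1}aN_{1}=0$; and for the reflection case the paper conjugates by the explicit reflection $Q'_{\mathbf{u}}$ to reduce to Case~1, while you invoke the $y\mapsto -y$ symmetry of $V_{\tau}$ directly. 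Both arguments are equivalent and yield the same conclusion $Q_{\mathbf{v}}\in\{Q_{\mathbf{u}},\sigma_{0}(Q_{\mathbf{u}})\}$.
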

\begin{proof}
	According to the construction of $2M_{\tau}$, we have
 \begin{align}\label{2mtaubaohanyu}
   2M_\tau\subseteq \{(x,y):x\geqslant0,y\geqslant0\}.
 \end{align}
 Given any $Q_\mathbf{u}\in \mathcal{Q}$, suppose there exists a convex lattice polygon $Q_\mathbf{v}\in\mathcal{Q}$ unimodularly equivalent to $Q_\mathbf{u}$, that is, there exists a unimodular transformation $\sigma$ such that $\sigma(Q_\mathbf{u})=Q_\mathbf{v}$. Now consider $Q_\mathbf{u}$ as a matrix, whose columns are the coordinates of the lattice points contained in polygon $Q_\mathbf{u}$, and do the same for $Q_\mathbf{v}$. Then the transformation $\sigma$ can be written as follows:
	\begin{equation*}
		\sigma(Q_\mathbf{u})\coloneqq AQ_\mathbf{u}+b=Q_\mathbf{v},
	\end{equation*}
	where $A$ is a unimodular matrix and $b$ is an integer vector.
	
	Let $q_\mathbf{u}^{(1)}=(0,0)$, $q_\mathbf{u}^{(2)}$, $\cdots$, $q_\mathbf{u}^{(\left|V_\tau\right|+1)}$ be the vertices of $Q_\mathbf{u}$ in the clockwise order, and we denote the vertices of $Q_\mathbf{v}$ in the same way. By the definition of $Q_\mathbf{u}$ and $Q_\mathbf{v}$, $q_\mathbf{u}^{(\left|V_\tau\right|+1)}=q_\mathbf{v}^{(\left|V_\tau\right|+1)}=(n,0)$. It is easy to see that, for both $Q_\mathbf{u}$ and $Q_\mathbf{v}$, any short side contains strictly fewer lattice points than the diameter. So $\sigma$ maps the diameter of $Q_\mathbf{u}$ to the diameter of $Q_\mathbf{v}$. Therefore, the discussion can be divided into the following two cases.
	
	\begin{case}\label{case1}
		$\sigma(q_\mathbf{u}^{(1)})=q_\mathbf{v}^{(1)}$, $\sigma(q_\mathbf{u}^{(\left|V_\tau\right|+1)})=q_\mathbf{v}^{(\left|V_\tau\right|+1)}$. By the fact that unimodular transformation preserves the adjacency of the vertices of convex lattice polygons, we have $\sigma(q_\mathbf{u}^{(i)})=q_\mathbf{v}^{(i)}$ for all $i\in\{1,\cdots,\left|V_\tau\right|+1\}$. It is easy to see that $b=(0,0)$ and $A$ can be written in the following form:
		\begin{equation*}
			A=
			\begin{bmatrix}
				-1 & a\\
				0 & 1
			\end{bmatrix}
			\ or\ 
			\begin{bmatrix}
				-1 & a\\
				0 & -1
			\end{bmatrix},
		\end{equation*}
		where $a$ is an integer. By (\ref{2mtaubaohanyu}), we know that $Q_\mathbf{u},Q_\mathbf{v}\subseteq \{(x,y):x\geqslant0,y\geqslant0\}$, then $A$ can only be 
  \begin{align*}
      A=
			\begin{bmatrix}
				-1 & a\\
				0 & 1
			\end{bmatrix}.
  \end{align*}
  
  If there exists an $i\in\{2,\cdots,\left|V_\tau\right|\}$ such that $q_\mathbf{u}^{(i)}=q_\mathbf{v}^{(i)}$, then $A=I_2$, where $I_2$ is the identity matrix. Otherwise $q_\mathbf{u}^{(2)}\neq q_\mathbf{v}^{(2)}$ (in fact now $q_\mathbf{u}^{(i)}\neq q_\mathbf{v}^{(i)}$ for all $i\in\{2,\cdots,\left|V_\tau\right|\}$), which means that $q_\mathbf{u}^{(2)}$ and $q_\mathbf{v}^{(2)}$ have different vertical coordinates (when $\tau\geqslant \sqrt{5}$). This is because $q_\mathbf{u}^{(2)}$ and $q_\mathbf{v}^{(2)}$ can only be lattice point $\mathbf{q}_{1}^{1}$ or $\mathbf{q}_{1}^{2}$ on the short side $S_{1}$ of $2M_\tau$. But for any integer point $z\in\mathbb{Z}^2$, $Az$ and $z$ have the same vertical coordinates, as do $q_\mathbf{u}^{(2)}$ and $\sigma(q_\mathbf{u}^{(2)})= q_\mathbf{v}^{(2)}$, which leads to a contradiction. So in this case, we have $A=I_2$ and $Q_\mathbf{v}=I_2Q_\mathbf{u}=Q_\mathbf{u}$.
	\end{case}
	
	\begin{case}\label{case2}
		$\sigma(q_\mathbf{u}^{(1)})=q_\mathbf{v}^{(\left|V_\tau\right|+1)}$, $\sigma(q_\mathbf{u}^{(\left|V_\tau\right|+1)})=q_\mathbf{v}^{(1)}$. It is easy to see that
		\begin{equation*}
			A=
			\begin{bmatrix}
				-1 & a\\
				0 & 1
			\end{bmatrix}
			\ or\ 
			\begin{bmatrix}
				-1 & a\\
				0 & -1
			\end{bmatrix},\ 
			b=
			\begin{bmatrix}
				n\\0
			\end{bmatrix},
		\end{equation*}
		where $a$ is an integer. Similar to case \ref{case1}, $A$ can only be 
  \begin{align*}
      A=
			\begin{bmatrix}
				-1 & a\\
				0 & 1
			\end{bmatrix}.
  \end{align*}
  
		Let
		\begin{equation}\label{qupie}
			Q_\mathbf{u}'\coloneqq
			\begin{bmatrix}
				-1 & 0\\
				0 & 1
			\end{bmatrix}
			Q_\mathbf{u}+
			\begin{bmatrix}
				n\\0
			\end{bmatrix}
                \coloneqq \sigma'(Q_\mathbf{u}),
		\end{equation}
		we have
		\begin{equation}\label{lingygeguanxi}
			Q_\mathbf{v}=
			\begin{bmatrix}
				-1 & a\\
				0 & 1
			\end{bmatrix}
			Q_\mathbf{u}+
			\begin{bmatrix}
				n\\0
			\end{bmatrix}
			=
			\begin{bmatrix}
				1 & a\\
				0 & 1
			\end{bmatrix}
			Q_\mathbf{u}'\coloneqq A'Q_\mathbf{u}'.
		\end{equation}

        Obviously, $\sigma'$ in (\ref{qupie}) flips $Q_\mathbf{u}$ about Y-axis and translates it to the right by $n$ units. Let $q_\mathbf{u}^{\prime(1)}=\sigma'(q_\mathbf{u}^{(\left|V_\tau\right|+1)})=(0,0)$ and $q_\mathbf{u}^{\prime(i)}$ be the $i$th vertex of $Q'_\mathbf{u}$ in the clockwise order, then
            \begin{gather}
                q_\mathbf{u}^{\prime(i)}=\sigma'(q_\mathbf{u}^{(\left|V_\tau\right|+2-i)}),\quad i\in\{1,\cdots,\left|V_\tau\right|+1\}.
            \end{gather}
        Particularly $q_\mathbf{u}^{\prime(\left|V_\tau\right|+1)}=\sigma'(q_\mathbf{u}^{(1)})=(n,0)$.
        
        According to (\ref{lingygeguanxi}), we know that $A^{'}$  maps vertex $(0,0)$ of $Q_{\mathbf{u}}^{'}$ to vertex $(0,0)$ of $Q_{\mathbf{v}}$. Further, $A^{'}$  maps vertex $(n,0)$ to vertex $(n,0)$. So similar to  case \ref{case1}, we have $A'=I_2$ and $\tau\geqslant\sqrt{5}$. So in this case, we have $a=0$,
		\begin{equation*}
			A=
			\begin{bmatrix}
				-1 & 0\\
				0 & 1
			\end{bmatrix}\ 
			and\ Q_\mathbf{v}=
			\begin{bmatrix}
				-1 & 0\\
				0 & 1
			\end{bmatrix}
			Q_\mathbf{u}+
			\begin{bmatrix}
				n\\0
			\end{bmatrix}.
		\end{equation*}
	\end{case}
	The theorem has been proved.
\end{proof}

To further simplify the proof of Theorem \ref{bound}, we state the following lemma.

\begin{lem}\label{dandR}
	Given a sufficiently large $R$ and the corresponding closed disc $B_{R}$, let $\tau\geqslant\sqrt{2}$, $d$ be the length of the diameter of $2M_{\tau}$. If 
	\begin{equation}
		d\leqslant \sqrt{2}R,
	\end{equation}
	then $2M_\tau$ can be covered by $B_R$ after translating $d/2$ units to the left.
\end{lem}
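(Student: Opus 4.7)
The plan is to show that the height of $2M_\tau$ is at most $d/2$, so that after the leftward shift of $d/2$ units the polygon lies inside the rectangle $[-d/2,d/2]\times[0,d/2]$; every point of this rectangle is at distance at most $\sqrt{(d/2)^2+(d/2)^2}=d/\sqrt{2}\leq R$ from the origin, whence containment in $B_R$ follows.

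For the height bound, recall that the short sides of $M_\tau$ are the primitive vectors in $V_\tau$ traversed in clockwise order (i.e., in order of decreasing polar angle), starting at $(0,0)$ and ending at $(N,0)$, where $N=\sum_{v\in V_\tau}v_x$ and $d=2N$. Splitting $V_\tau$ into $V_\tau^+=\{v\in V_\tau:v_y>0\}$, its mirror image $V_\tau^-=\{(a,-b):(a,b)\in V_\tau^+\}$, and the isolated vector $(1,0)$, we get $N=1+2\sum_{v\in V_\tau^+}v_x$. The topmost vertex of $M_\tau$ is reached exactly once all edges of positive $y$-component have been added, so its $y$-coordinate equals $\sum_{v\in V_\tau^+}v_y$, and hence the height of $2M_\tau$ is $H=2\sum_{v\in V_\tau^+}v_y$.

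The crux is the coordinate-swap involution $(a,b)\mapsto(b,a)$ on $V_\tau^+$: it is well defined because primitivity, positivity of both coordinates, and the norm bound $a^2+b^2\leq\tau^2$ are all symmetric in $(a,b)$. This gives $\sum_{v\in V_\tau^+}v_y=\sum_{v\in V_\tau^+}v_x$, so $H=N-1<d/2$. The remaining verification that $2M_\tau\subseteq[0,d]\times[0,d/2]$ is then routine (the $x$-range $[0,d]$ is immediate since every short-side edge vector has positive $x$-component), and the shifted polygon sits in the claimed rectangle. The only non-trivial step is the symmetry identity for $V_\tau^+$; the rest is bookkeeping from the construction of $M_\tau$, and the hypothesis $\tau\geq\sqrt{2}$ is used only to guarantee that $V_\tau^+$ is nonempty so that $M_\tau$ is actually two-dimensional.
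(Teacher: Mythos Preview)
Your proof is correct and follows essentially the same route as the paper's: both arguments exploit the $(a,b)\leftrightarrow(b,a)$ symmetry of the primitive vectors in the open first quadrant to show that the height of $2M_\tau$ equals $\sum_{v\in V_\tau^+}2v_x=N-1<N=d/2$, then enclose the polygon in the $d\times(d/2)$ rectangle whose farthest corner from the centre lies at distance $d/\sqrt{2}\le R$. Your write-up is in fact a bit more explicit than the paper's about why $\ell_\tau<d/2$ holds and about the role of the hypothesis $\tau\ge\sqrt{2}$.
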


\begin{proof}
Given any $\tau\geqslant \sqrt{2}$, a convex lattice polygon $2M_{\tau}$ can be constructed. Let $d$ be the length of the diameter of $2M_\tau$, and $G_{\tau}$ denote the set of all primitive integer vectors in the domain $\{(x,y):x^2+y^2\leqslant\tau^2,x>0,y>0\}$. The vectors in $G_{\tau}$ are symmetric with respect to the line $y=x$. Let $2G_\tau\coloneqq \{2\mathbf{v}:\mathbf{v}\in G_\tau\}$, then we have
 \begin{align}\label{he}
     \sum_{\mathbf{v}\in 2G_{\tau}}=(\ell_{\tau},\ell_{\tau}),
 \end{align}
 where $\ell_{\tau}$ is a positive integer.

 The distance from the highest point $B$ of $2M_{\tau}$ to the X-axis can be calculated as the sum of the vertical coordinates of the vectors in $2G_{\tau}$, i.e. $\ell_{\tau}$. By (\ref{he}), we have $\ell_{\tau}<d/2$. Therefore, $2M_\tau$ can be covered by a lattice rectangle $T_\tau$ of length $d$ and width $d/2$ (it is a positive integer), of which the long side coincides with the diameter of the polygon $2M_\tau$, as shown on the left side of Figure \ref{XandY}.

 Supposing that the midpoint of $2M_{\tau}$'s diameter is $C$, we translate point $C$ to the origin by moving it to the left by $d/2$ units. This translation is also applied to the rectangle $T_{\tau}$ and convex lattice polygon $2M_{\tau}$, as shown on the right side of Figure \ref{XandY}. After the translation, the distance between the vertex $A$ (see the right side of Figure \ref{XandY}) of the rectangle and the origin is $\frac{\sqrt{2}}{2}d$. To make rectangle $T_{\tau}$ covered by $B_{R}$, $d$ only needs to satisfy
  \begin{align*}
      \frac{\sqrt{2}}{2}d\leqslant R,
  \end{align*}
  namely
  \begin{align}
      d\leqslant \sqrt{2}R.
  \end{align}
Thus, when $d\leqslant \sqrt{2}R$, $2M_{\tau}$ can be covered by $B_{R}$ after translating it to the left by $d/2$ units.
\end{proof}

\begin{figure}
\centering
    \begin{tikzpicture}
	\draw[step=.2cm,cyan,very thin] (-7,-2.4) grid (8.4,2.4);
	
	\draw[shift={(-2.4,0.4)}][shift={(2.2,1.2)},scale=0.8] (-4,0) -- (-3.5,0);
	\draw[shift={(-2.4,0.4)}][shift={(-0.2,0)},scale=0.8] (-4,0) -- (-3.5,0.5);
	\draw[shift={(-2.4,0.4)}][shift={(-0.6,-0.8)},scale=0.8] (-4,0) -- (-3.5,1);
	\draw[shift={(-2.4,0.4)}][shift={(-1,-2)},scale=0.8] (-4,0) -- (-3.5,1.5);
	\draw[shift={(-2.4,0.4)}][shift={(0.2,0.4)},scale=0.8] (-4,0) -- (-3,0.5);
	\draw[shift={(-2.4,0.4)}][shift={(1,0.8)},scale=0.8] (-4,0) -- (-2.5,0.5);
	
	\draw[shift={(-2.4,0.4)}][shift={(4.6,0.4)},scale=0.8] (-4,0) -- (-3.5,-0.5);
	\draw[shift={(-2.4,0.4)}][shift={(5,0)},scale=0.8] (-4,0) -- (-3.5,-1);
	\draw[shift={(-2.4,0.4)}][shift={(5.4,-0.8)},scale=0.8] (-4,0) -- (-3.5,-1.5);
	\draw[shift={(-2.4,0.4)}][shift={(3.8,0.8)},scale=0.8] (-4,0) -- (-3,-0.5);
	\draw[shift={(-2.4,0.4)}][shift={(2.6,1.2)},scale=0.8] (-4,0) -- (-2.5,-0.5);
	\draw (-6.6,-1.6) -- (0.2,-1.6);
	
	\draw[->] (-6.6,-1.6) -- (-6.6,2.1);
	\draw[->] (-6.6,-1.6) -- (0.5,-1.6);
	\path[font=\fontsize{6}{6}\selectfont] (-3.2,1.4) node (B) {$B$};
	\draw[fill=black] (-3.2,1.6) circle (1pt);
	\path[font=\fontsize{6}{6}\selectfont] (-3.2,-1.8) node (C) {$C$};
	\path[font=\fontsize{6}{6}\selectfont] (-6.7,-1.8) node (O) {$O$};
	\draw[fill=black] (-3.2,-1.6) circle (1pt);
	\path[font=\fontsize{8}{8}\selectfont] (-1.8,1.45) node (mmtau) {$2M_{\tau}$};
	\draw (-6.6,-1.6) rectangle (0.2,1.8);
	\path[font=\fontsize{8}{8}\selectfont] (-1.6,2.05) node (ttau) {$T_{\tau}$};
	
	\draw[shift={(5.4,0.4)}][shift={(2.2,1.2)},scale=0.8] (-4,0) -- (-3.5,0);
	\draw[shift={(5.4,0.4)}][shift={(-0.2,0)},scale=0.8] (-4,0) -- (-3.5,0.5);
	\draw[shift={(5.4,0.4)}][shift={(-0.6,-0.8)},scale=0.8] (-4,0) -- (-3.5,1);
	\draw[shift={(5.4,0.4)}][shift={(-1,-2)},scale=0.8] (-4,0) -- (-3.5,1.5);
	\draw[shift={(5.4,0.4)}][shift={(0.2,0.4)},scale=0.8] (-4,0) -- (-3,0.5);
	\draw[shift={(5.4,0.4)}][shift={(1,0.8)},scale=0.8] (-4,0) -- (-2.5,0.5);
	
	\draw[shift={(5.4,0.4)}][shift={(4.6,0.4)},scale=0.8] (-4,0) -- (-3.5,-0.5);
	\draw[shift={(5.4,0.4)}][shift={(5,0)},scale=0.8] (-4,0) -- (-3.5,-1);
	\draw[shift={(5.4,0.4)}][shift={(5.4,-0.8)},scale=0.8] (-4,0) -- (-3.5,-1.5);
	\draw[shift={(5.4,0.4)}][shift={(3.8,0.8)},scale=0.8] (-4,0) -- (-3,-0.5);
	\draw[shift={(5.4,0.4)}][shift={(2.6,1.2)},scale=0.8] (-4,0) -- (-2.5,-0.5);
	\draw (1.2,-1.6) -- (8,-1.6);
	
	\draw[->] (4.6,-1.6) -- (4.6,2.1);
	\draw[->] (1.2,-1.6) -- (8.3,-1.6);
	\path[font=\fontsize{6}{6}\selectfont] (1,1.9) node (A) {$A$};
	\draw[fill=black] (1.2,1.8) circle (1pt);
	\path[font=\fontsize{6}{6}\selectfont] (4.8,1.4) node (B') {$B$};
	\draw[fill=black] (4.6,1.6) circle (1pt);
	\path[font=\fontsize{6}{6}\selectfont] (4.6,-1.8) node (C') {$O(C)$};
	\draw[fill=black] (4.6,-1.6) circle (1pt);
	\path[font=\fontsize{8}{8}\selectfont] (6,1.45) node (mmtau') {$2M_{\tau}$};
	\draw (1.2,-1.6) rectangle (8,1.8);
	\path[font=\fontsize{8}{8}\selectfont] (6.4,2.05) node (ttau') {$T_{\tau}$};
\end{tikzpicture}
    \caption{Left: $2M_{\tau}$ and $T_{\tau}$ before translating; Right: translated $2M_{\tau}$ and $T_{\tau}$}
    \label{XandY}
\end{figure}
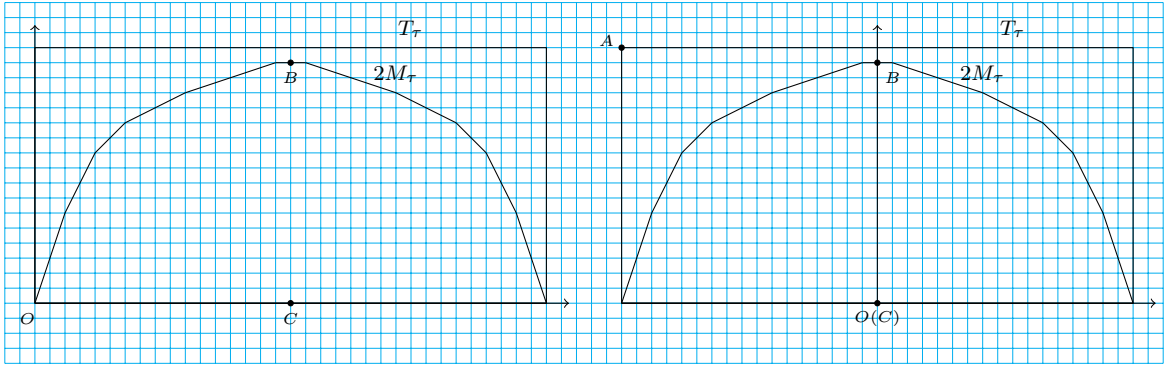

Now we are ready to prove Theorem \ref{bound}.

\begin{proof}[Proof of Theorem \ref{bound}]
Let $R$ be a sufficiently large number, let $B_{R}$ be a close disc as defined above. Firstly, we prove the upper bound. By the result of $\mathrm{B\acute{a}r\acute{a}ny}$ and Pach \cite{barany}, and the obvious fact that the area of each convex lattice polygon contained in $B_{R}$ does not exceed $\pi R^2$, we have
	\begin{align}\label{shangjie}
        \log\kappa(R)&\leqslant\log\sum_{i=1}^{\lfloor2\pi R^2\rfloor}v(2,i)\leqslant c_{2}R^{2/3},
	\end{align}
	for some  constant $c_{2}$.
	
	Next, we prove the lower bound. Let $d$ be the length of the diameter of $2M_\tau$, for any $\tau\geqslant\sqrt{5}$, it is easy to see that
	\begin{equation*}
		d\leqslant 2\left|V_{\tau}\right|\cdot\tau.
	\end{equation*}
	By (\ref{vtaushu}), for any constant $c>1$,
        \begin{equation*}\label{eqdandtau}
		d\leqslant c\frac{6}{\pi}\tau^{3}
	\end{equation*}
        will hold when $\tau$ is sufficiently large. So there exists a sufficiently large $\tau'$ such that when $\tau>\tau'$, we have
         \begin{align}
             d\leqslant \sqrt{2}\frac{6}{\pi}\tau^{3}.
         \end{align}

 Given a sufficiently large $R$, which satisfies
	\begin{equation}\label{tauxing}
		\tau^*\coloneqq \left(\frac{\pi}{6}R\right)^{1/3}>\tau',
	\end{equation}
	then we have
	\begin{equation}
		d\leqslant \sqrt{2}\frac{6}{\pi}\tau^{*3}=\sqrt{2}R.
	\end{equation}
    It is deduced by lemma \ref{dandR} that $2M_\tau^*$ can be covered by $B_R$ after an integral translation. Denote the translated polygon as $2M_{\tau^*}^{'}$, and we have $2M_{\tau^*}^{'}\subseteq B_R$.
	
	Now, we construct the set of convex lattice polygons in $2M_{\tau^*}^{'}$, denoted by $\mathcal{Q}$, following the procedure described above. It is obvious that each polygon in $\mathcal{Q}$ is contained in $B_R$. So by Theorem \ref{noequi}, we have
	\begin{equation}\label{kappaandvtau}
		\kappa(R)\geqslant \frac{\left|\mathcal{Q}\right|}{2}=2^{\left|V_{\tau^*}\right|-2}.
	\end{equation}
	By (\ref{vtaushu}), (\ref{tauxing}) and (\ref{kappaandvtau}), we get
	\begin{equation}
		\log\kappa(R)\geqslant \left|V_{\tau^*}\right|-2 \geqslant \frac{3}{\pi}\tau^{*2}=\frac{3}{\pi}(\frac{\pi}{6}R)^{2/3}=\frac{1}{2}(\frac{6}{\pi})^{1/3}R^{2/3},
	\end{equation}
	for any sufficiently large $R$.
\end{proof}

\begin{rem}\label{rem1}
If $R$ is sufficiently large, the constant $c_{2}$ given by (\ref{shangjie}) is not too large, that is, $\log\kappa(R)<11(2\pi R)^{2/3}$.
\end{rem}

By the proof of Theorem \ref{bound} and Remark \ref{rem1}, we can get
\begin{equation}
	\frac{1}{2}(\frac{6}{\pi})^{1/3}\leqslant\liminf_RR^{-2/3}\log\kappa(R)\leqslant\limsup_RR^{-2/3}\log\kappa(R)\leqslant 11(2\pi)^{2/3},
\end{equation}
which naturally leads to the following problem.

\begin{prob}
	Decide whether
	\begin{equation}\label{limr2/3}
		\lim_{R\rightarrow\infty}R^{-2/3}\log\kappa(R)
	\end{equation}
	exists or not. Determine the limit of (\ref{limr2/3}) if it exists.
\end{prob}

Furthermore, based on the previous construction of $\mathcal{Q}$ in $2M_{\tau^*}^{'}$, for the maximum number of vertices of convex lattice polygons in $B_{R}$, denoted by $M$, we have
\begin{align*}
    M\geqslant 2|V_{\tau^{*}}|.
\end{align*}
Namely, by (\ref{vtaushu}) and (\ref{tauxing}), we can get
\begin{align}
     M\geqslant\frac{6}{\pi}(\tau^{*})^{2}=\frac{6}{\pi}(\frac{\pi}{6}R)^{2/3}=(\frac{6}{\pi})^{1/3}R^{2/3}.
\end{align}
By the result in \cite{arnol1980statistics}, we know that
\begin{align}\label{dingdianshang}
    M\leqslant 16(2\pi R^2)^{1/3}=16(2\pi)^{1/3}R^{2/3}.
\end{align}

\section{Enumeration of Convex Lattice Polygons in a disc}
In this section, we introduce an algorithm to find  all representative elements of convex lattice polygons in a given disc, and then calculate the number of different classes. Here we classify the convex lattice polygons according to the cardinality.  Our idea is analogous to that of G. Brown and A. M. Kasprzyk \cite{boxshave}. We improve and extend the algorithm in \cite{boxshave}, and apply it to solve our problem.

\begin{defn}\cite{boxshave}
	Let $P$ be a convex lattice polygon, and let $v\in \mathrm{vert}(P)$ be a vertex of $P$. if the polygon $P_v\coloneqq \mathrm{conv}((P\cap \mathbb{Z}^2)\backslash \left\{v\right\})$ satisfies $\mathrm{dim} P_v=\mathrm{dim}P$, then $P_v$ is said to have been obtained from $P$ by \emph{shaving}. Given a polygon $Q$, we say that $Q$ can be obtained from $P$ via \emph{successive shaving} if there exists a sequence of shavings $Q=P^{(0)}\subset \cdots \subset P^{(n)}=P$, where $P^{(i-1)}=P^{(i)}_{v_i}$ for some $v_i\in \mathrm{vert}(P^{(i)}), 0<i\leqslant n$.
\end{defn}

The following lemma guarantees that all convex lattice polygons in a disc can be obtained by successive shaving vertices. The lemma is similar to the result of \cite{boxshave}. For the sake of completeness, we reproduce the proof here.
\begin{lem}\label{allpolyincir}
	Let $P$ be a convex lattice polygon, let $Q$ be a convex lattice polygon contained in $P$. Then $Q$ can be obtained from $P$ via successive shaving.
\end{lem}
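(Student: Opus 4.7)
The plan is to proceed by induction on the quantity $n := |P \cap \mathbb{Z}^2| - |Q \cap \mathbb{Z}^2|$, the number of lattice points to be eliminated. For the base case $n = 0$, every lattice point of $P$ already lies in $Q$; since by the paper's convention each convex lattice polygon equals the convex hull of its lattice points, this forces $P = Q$, so no shavings are required.

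For the inductive step, the key is to exhibit a vertex $v \in \mathrm{vert}(P)$ with $v \notin Q$ such that shaving $v$ off $P$ produces a polygon $P_v$ that still contains $Q$. Such a vertex must exist: otherwise every vertex of $P$ would lie in $Q$, and convexity would give $P = \mathrm{conv}(\mathrm{vert}(P)) \subseteq Q$, contradicting $P \neq Q$. With $v \notin Q$ we have $Q \cap \mathbb{Z}^2 \subseteq (P \cap \mathbb{Z}^2) \setminus \{v\}$, so taking convex hulls yields $Q \subseteq P_v$. The dimension requirement $\dim P_v = \dim P = 2$ then follows at once from $\dim Q = 2$ together with $Q \subseteq P_v$, so the move $P \to P_v$ is indeed a legitimate shaving.

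To close the induction I would note that $v$, being an extreme point of $P$, does not lie in $\mathrm{conv}((P \cap \mathbb{Z}^2) \setminus \{v\}) = P_v$, so exactly one lattice point is removed and $|P_v \cap \mathbb{Z}^2| = |P \cap \mathbb{Z}^2| - 1$. Applying the inductive hypothesis to the pair $(P_v, Q)$ supplies a successive shaving from $P_v$ down to $Q$, which, prepended with the single shaving $P \to P_v$, yields the desired sequence.

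The argument is mostly bookkeeping; the one geometrically substantive step is the dimension check for $P_v$, which tacitly uses that $Q$ is two-dimensional (the implicit hypothesis throughout this section). I do not foresee any serious obstacle beyond being careful that the selected vertex $v$ is simultaneously a vertex of $P$ and not a point of $Q$, and the pigeonhole-style argument above handles exactly that.
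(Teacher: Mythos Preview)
Your proposal is correct and follows essentially the same approach as the paper: repeatedly shave a vertex of $P$ lying outside $Q$, with termination guaranteed by the strictly decreasing lattice-point count bounded below by $|Q\cap\mathbb{Z}^2|$. If anything, you supply more detail than the paper does---explicitly justifying why such a vertex exists, why $Q\subseteq P_v$, and why $\dim P_v=2$---so the argument is a mild refinement of, but structurally identical to, the paper's proof.
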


\begin{proof}
	If $P=Q$, then we are done. If $Q\subsetneq P$, there must be a vertex $v\in \mathrm{vert}(P)$ such that $v\not\in Q$. By shaving $v$ from $P$ we can get a polygon $P_v$ satisfying $Q\subset P_v$. If $Q=P_v$, we are done. Otherwise, if $P_v\neq Q$, we can keep shaving vertices of $P$ and let $Q$ be contained in the new polygons.
	
According to the above definition we know
	\begin{equation*}
		\left|P^{(i-1)}\cap \mathbb{Z}^2\right|=\left|P^{(i)}\cap \mathbb{Z}^2\right|-1,\quad 0<i\leqslant n.
	\end{equation*}
	Because the polygons obtained by this shaving process always contain $Q$, the numbers of lattice points of those polygons have a lower bound, namely $\left|Q\cap \mathbb{Z}^2\right|$. So the process will terminate after finitely many steps of shaving, and the polygon obtained from the last step will be $Q$.
\end{proof}

Now we denote the largest convex lattice polygon within $B_R$ by $N_R$, namely,
\begin{equation}
	N_R\coloneqq \mathrm{conv}(B_R\cap \mathbb{Z}^2).
\end{equation}
Let $\mathcal{P}^k$ ($3\leqslant k\leqslant |N_{R}|$) denote the family of representative elements of convex lattice polygons with $k$ lattice points in disc $B_{R}$.  The following lemma will show that it is not necessary to find all convex lattice polygons in a disc for our purpose.
\begin{lem}\label{noneqisenough}
	Suppose that $\mathcal{P}^k\coloneqq\left\{P_1, \cdots  P_{n_k}\right\}$ is not empty for some $k$ $(4\leqslant k\leqslant |N_{R}|)$. Then for any convex lattice polygon $Q$ with $k-1$ lattice points which is contained in $B_R$, there exists $P_i\in \mathcal{P}^k$ and a vertex $v\in \mathrm{vert}(P_i)$, such that $Q$ is unimodularly equivalent to $\mathrm{conv}((P_i\cap\mathbb{Z}^{2})\backslash\{v\})$.
\end{lem}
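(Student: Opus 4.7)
The plan is to reduce the statement to Lemma \ref{allpolyincir} applied with $N_R$ playing the role of the ambient polygon, together with the observation that shaving is equivariant under unimodular transformations.

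First I would note that any convex lattice polygon $P\subseteq B_R$ satisfies $P\cap\mathbb{Z}^2\subseteq N_R\cap\mathbb{Z}^2$, and since $P$ is a lattice polygon, $|P|\leqslant|N_R|$; in particular the hypothesis $k\leqslant|N_R|$ is consistent with $\mathcal P^k$ being nonempty, and $|Q|=k-1<|N_R|$ forces $Q\subsetneq N_R$. Applying Lemma \ref{allpolyincir} to the pair $(N_R,Q)$ produces a successive shaving chain
\[
Q=P^{(0)}\subsetneq P^{(1)}\subsetneq\cdots\subsetneq P^{(n)}=N_R,
\]
with $P^{(i-1)}=P^{(i)}_{v_i}$ for some vertex $v_i\in\mathrm{vert}(P^{(i)})$. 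Because each shaving step removes exactly one lattice point (as already recorded in the proof of Lemma \ref{allpolyincir}), the polygon $P^{(1)}$ has exactly $|Q|+1=k$ lattice points, is contained in $N_R\subseteq B_R$, and satisfies $Q=\mathrm{conv}((P^{(1)}\cap\mathbb{Z}^2)\setminus\{v_1\})$ for the vertex $v_1\in\mathrm{vert}(P^{(1)})$.

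Since $P^{(1)}$ is a convex lattice polygon with $k$ lattice points inside $B_R$, by the definition of $\mathcal P^k$ there is some $P_i\in\mathcal P^k$ and a $\mathbb{Z}^2$-preserving unimodular transformation $\sigma$ with $\sigma(P^{(1)})=P_i$. I would then invoke the elementary facts (already used implicitly in the introduction) that $\sigma$ carries $\mathbb{Z}^2$ to $\mathbb{Z}^2$, preserves convex hulls, and preserves the vertex set; consequently $\sigma(v_1)\in\mathrm{vert}(P_i)$ and
\[
\sigma(Q)=\sigma\bigl(\mathrm{conv}((P^{(1)}\cap\mathbb{Z}^2)\setminus\{v_1\})\bigr)=\mathrm{conv}\bigl((P_i\cap\mathbb{Z}^2)\setminus\{\sigma(v_1)\}\bigr),
\]
so $Q\thicksim\mathrm{conv}((P_i\cap\mathbb{Z}^2)\setminus\{\sigma(v_1)\})$, as required.

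No step in this argument looks substantial: the genuine content lies in Lemma \ref{allpolyincir}, and the only things one has to verify by hand are that shaving commutes with unimodular transformations and that the cardinality drops by exactly one at each shaving step. The mildly delicate point, which I would make explicit, is to check that the polygon $P^{(1)}$ immediately above $Q$ in the chain is indeed $2$-dimensional and has the correct cardinality $k$; both facts are built into the definition of shaving and the structure of the chain, so no additional case analysis is needed.
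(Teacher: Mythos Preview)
Your proposal is correct and follows essentially the same argument as the paper: apply Lemma \ref{allpolyincir} to obtain a shaving chain from $N_R$ down to $Q$, take $P^{(1)}$ as the polygon with $k$ lattice points, pick its representative $P_i\in\mathcal P^k$, and transport the shaved vertex via the unimodular transformation. You have simply been more explicit about the equivariance of shaving under unimodular maps and about why $P^{(1)}$ has the right cardinality, but the structure is identical.
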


\begin{proof}
	Let $\mathcal{P}^k\neq\emptyset$ and $Q$ be a convex lattice polygon with $k-1$ lattice points which is contained in $B_R$. Then there exists a sequence of shavings $Q=P^{(0)}\subset \cdots \subset P^{(n)}=N_R$, where $P^{(i-1)}=P^{(i)}_{v_i}$ for some $v_i\in \mathrm{vert}(P^{(i)}), 0<i\leqslant n$.
	
	In the above sequence of shavings, it is obvious that $P^{(1)}$ contain $k$ lattice points and $P^{(1)}\subset B_R$. So by the definition of $\mathcal{P}^k$, there exists $P_i\in\mathcal{P}^k$ such that $P_i$ is unimodularly equivalent to $P^{(1)}$. If $v\in \mathrm{vert}(P_i)$ is the image of $v_1\in P^{(1)}$ under the unimodular transformation between $P_i$ and $P^{(1)}$, then $Q$ is unimodularly equivalent to $\mathrm{conv}((P_i\cap \mathbb{Z}^{2})\backslash\{v\})$, which is what we need.
\end{proof}

Lemma \ref{allpolyincir} and \ref{noneqisenough} present the main idea of our algorithm. Starting with $n=\left|N_R\right|$, each vertex for every convex lattice polygon in $\mathcal{P}^n$ is shaved in turn to obtain the set of convex lattice polygons with $n-1$ lattice points. Then $\mathcal{P}^{n-1}$ can be obtained from this resulting set by removing equivalent convex lattice polygons. Finally, $\mathcal{P}^{3}\cup\cdots \cup \mathcal{P}^{|N_{R}|}$ is exactly the target of our algorithm.

Now we describe the process of the algorithm in detail. We represent a lattice point by its coordinates, and represent a convex lattice polygon by the sequence of coordinates of all lattice points contained in the convex lattice polygon. 

Our algorithm consists of three main parts: the main algorithm, the algorithm to determine whether the values of the unimodular invariants on two polygons are equal (denoted as COMPinv), and the algorithm to determine whether there is a unimodular transformation between two polygons (denoted as EQ).

Let $P$ be a convex lattice polygon. The unimodular invariants used in COMPinv algorithm are as follows:
\begin{enumerate}
	\item The number of vertices of $P$, denoted by $f_0(P)$.
	\item The number of lattice points on the boundary of $P$, denoted by $\mathrm{bound}(P)$.
	\item The area of P, denoted by $\mathrm{area}(P)$.
	\item Let $s_i(P), i=1, \cdots, f_0(P)$ be the number of lattice points on $i$th side of $P$. The sequence obtained by arranging $\{s_i(P)\}$ in ascending order is the fourth invariant, which is denoted by $\mathrm{sides}(P)$.
	\item Let $tr_i(P), i=1, \cdots, f_0(P)$ be the area of the adjacent triangle formed by the adjacent vertices of $i$th vertex of $P$, as shown in Figure $\ref{5}$. The sequence obtained by arranging $\{tr_i(P)\}$ in ascending order is the fifth invariant, which is denoted by $\mathrm{tr}(P)$.
\end{enumerate}

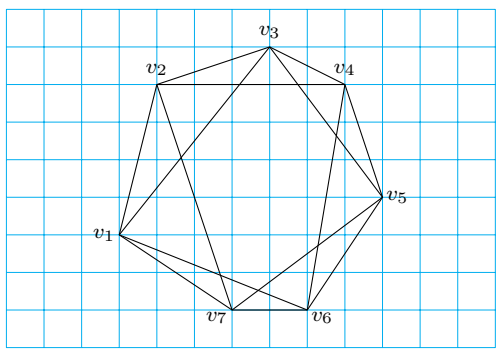
\begin{figure}
	\centering
	\begin{tikzpicture}
		\draw[step=.5cm,cyan,very thin] (-3.5,-2) grid (3,2.5);
		
		\draw (-2,-0.5) -- (-1.5,1.5) -- (0,2) -- (1,1.5) -- (1.5,0) -- (0.5,-1.5) -- (-0.5,-1.5) -- (-2,-.5);
		\draw (-2,-0.5) -- (0,2);
		\draw (-1.5,1.5) -- (1,1.5);
		\draw (0,2) -- (1.5,0);
		\draw (1,1.5) -- (0.5,-1.5);
		\draw (1.5,0) -- (-0.5,-1.5);
		\draw (-2,-0.5) -- (0.5,-1.5);
		\draw (-1.5,1.5) -- (-0.5,-1.5);
		
		\path[font=\fontsize{8}{8}\selectfont] (-2.2,-0.5) node (v1) {$v_1$};
		\path[font=\fontsize{8}{8}\selectfont] (-1.5,1.7) node (v2) {$v_2$};
		\path[font=\fontsize{8}{8}\selectfont] (0,2.2) node (v3) {$v_3$};
		\path[font=\fontsize{8}{8}\selectfont] (1,1.7) node (v4) {$v_4$};
		\path[font=\fontsize{8}{8}\selectfont] (1.7,0) node (v5) {$v_5$};
		\path[font=\fontsize{8}{8}\selectfont] (0.7,-1.6) node (v6) {$v_6$};
		\path[font=\fontsize{8}{8}\selectfont] (-0.7,-1.6) node (v7) {$v_7$};
	\end{tikzpicture}
	\caption{Vertices and Adjacent Triangles}
	\label{5}
\end{figure}


The first three invariants are referred to as first-level invariants, while the last two are called second-level invariants. In the COMPinv algorithm, the first step involves verifying the first-level invariants, and subsequently checking the second-level invariants only if the first-level invariant values are equal.
\begin{rem}\label{invext}
	The above invariants are proved to be effective in the case of small radius by experiments. In addition, each of the above invariants can be extended to the $n$-dimensional case.
\end{rem}

The specific processes of the main algorithm, COMPinv algorithm and EQ algorithm are shown in Algorithm \ref{themainalg}, Algorithm \ref{INVAR} and Algorithm \ref{EQ} respectively.

In Algorithm \ref{themainalg}, the function CALinv takes a polygon as input and outputs the values of the invariants on that polygon. Let $\mathrm{INVAR}[P_v]$ be the values of invariants on $P_v$, and let $\mathcal{P}_{inv}^{n-1}\coloneqq\{\mathrm{INVAR}[P]:P\in\mathcal{P}^{n-1}\}$, for $n=4,\cdots,\left|N_R\right|+1$.


Algorithm \ref{INVAR} takes two polygons as input. If the values of all invariants on these two polygons are found to be equal, the COMPinv algorithm outputs 1, otherwise, it outputs 0. Obviously, when COMPinv algorithm outputs 0, the two polygons must not be unimodularly equivalent. In Algorithm \ref{INVAR}, the MATLAB command \textsc{convhull} is utilized to obtain all the vertices or the lattice points on the boundary of a given convex latiice polygon. It should be noted that the command \textsc{convhull} will report an error if the convex hull of the input points is not two-dimensional, and we check the dimension of a given convex lattice polygon in Algorithm \ref{themainalg} by catching this particular error.

Algorithm \ref{EQ} takes two polygons as input and determines whether there exists a unimodular transformation between them. If such a transformation exists, the EQ algorithm outputs 1, otherwise it outputs 0. Because the unimodular transformation does not change $tr_{i}(P)$, we can determine the possible images of any vertex based on the area of adjacent triangles. Then, we can find the unimodular matrix. The specific process is as follows. Let $v^{(1)}_i$, $i=1,\cdots,n_1$ denote the vertices of $P_1$, where each $v^{(1)}_i$ and its adjacent vertices form a triangle having minimal area $s_{1}$ among all such adjacent triangles. Then we fix $v^{(1)}_1$ and move it to the origin. If $v^{(2)}_i, i=1,\cdots,n_2$ are the vertices of $P_2$ whose adjacent triangles' area is equal to the $s_{1}$, we move them to the origin in turn. $P_{1}$ and $P_{2}$ do the same translation respectively. Let $\mathbf{a}$, $\mathbf{b}$ denote the edge vectors adjacent to translated vertex $v^{(1)}_{1}$, and let $\mathbf{c_i}$, $\mathbf{d_{i}}$ denote the edge vectors adjacent to translated vertex $v^{(2)}_i, i=1,\cdots,n_2$, where the edge vector adjacent to a vertex $\mathbf{x}$ of a lattice polytope is a vector $\mathbf{v}\in \mathbb{Z}^{n}$ such that $\mathbf{v}=\mathbf{y}-\mathbf{x}$ for some vertex $\mathbf{y}$ for which conv$(\mathbf{x},\mathbf{y})$ is an edge. Then we check whether there is a unimodular matrix between edge vectors $\mathbf{a}$, $\mathbf{b}$ and edge vectors $\mathbf{c_i}$, $\mathbf{d_{i}}$ for each $i$. If it does not exist, the two convex lattice polygons are not unimodularly equivalent. If it exists, we check whether the unimodular matrix maps the vertices of $P_{1}$ to the corresponding vertices of $P_{2}$. If it does, then the two polygons are unimodularly equivalent, otherwise, they are not unimodularly equivalent.


\begin{rem}
   From the above discussion, it can be seen that Algorithms \ref{themainalg}, \ref{INVAR}, and \ref{EQ} can \emph{almost} be extended to higher-dimensional cases. The main potential challenge is that vertices can be sorted in a counterclockwise or clockwise order in the two-dimensional case, but this is not applicable in higher-dimensional cases. Consequently, identifying the correspondence between the vertices of two convex lattice polytopes would be more challenging, which may result in a significantly higher time complexity for Algorithm \ref{EQ} in higher-dimensional cases.
\end{rem}

\begin{algorithm}
	\caption{The main algorithm}\label{themainalg}
	\KwIn{$N_R$}
	\KwOut{$\mathcal{P}=\mathcal{P}^{3}\cup\cdots\cup\mathcal{P}^{|N_{R}|}$}\
	Set {$n\leftarrow\left|N_R\right|$, $\mathcal{P}^{|N_{R}|}\leftarrow\{N_{R}\}$, $\mathcal{P}^{i}=\emptyset,i=3,4,...,|N_{R}|-1$}\;
        $\mathrm{INVAR}[N_R]\leftarrow\mathrm{CALinv}(N_R)$\;
        Set $\mathcal{P}_{inv}^{|N_{R}|}\leftarrow\{\mathrm{INVAR}[N_R]\}$, $\mathcal{P}_{inv}^{i}=\emptyset,i=3,4,...,|N_{R}|-1$;
	
	\While{$n\geqslant 4$}{
		\For{$P\in \mathcal{P}^{n}$}{
			\For{$v\in vert(P)$}{
				$P_{v}\leftarrow (P\cap\mathbb{Z}^{2})\backslash\{v\}$\;
				
				\If{$\mathrm{dim}(P_{v})=2$}{
					$\mathrm{INVAR}[P_v]\leftarrow\mathrm{CALinv}(P_v)$\;
					\eIf{$\mathcal{P}^{n-1}$ is empty}{
						$\mathcal{P}^{n-1}\leftarrow\{P_{v}\}$\;
						$\mathcal{P}_{inv}^{n-1}\leftarrow\left\{\mathrm{INVAR}[P_v]\right\}$}
					{$j\leftarrow 1$, $\mathrm{EQidx}\leftarrow 0$\;
						\While{$1\leqslant j\leqslant |\mathcal{P}^{n-1}|$}{
							\uIf{$\mathrm{COMPinv}(\mathcal{P}^{n-1}_{inv}[j],\mathrm{INVAR}[P_v])==0$}{$j\leftarrow j+1$}
							\uElseIf{$\mathrm{EQ}(\mathcal{P}^{n-1}[j], P_{v})==1$}
							{$\mathrm{EQidx}\leftarrow 1$\;
								$j\leftarrow\left|\mathcal{P}^{n-1}\right|+1$\;}
							\Else{$j\leftarrow j+1$\;}
						}
						
						\If{$\mathrm{EQidx}==0$}{$\mathcal{P}^{n-1}\leftarrow\mathcal{P}^{n-1}\cup\{P_{v}\}$\;
					$\mathcal{P}_{inv}^{n-1}\leftarrow\mathcal{P}_{inv}^{n-1}\cup\left\{\mathrm{INVAR}[P_v]\right\}$}}
					
			}}
		}
		$n\leftarrow n-1$
	}
\end{algorithm}

\begin{algorithm}
	\caption{COMPinv algorithm}\label{INVAR}
	\KwIn{the values of the unimodular invariants on two convex lattice polygons $P_1$ and $P_2$}
	\KwOut{a logical variable indicating whether the values of the invariants on $P_1$ and $P_2$ are equal}
	Read $f_0(P_1)$, $\mathrm{bound}(P_1)$, $\mathrm{area}(P_1)$, $f_0(P_2)$, $\mathrm{bound}(P_2)$ and $\mathrm{area}(P_2)$\;
	Read $\mathrm{sides}(P_1)$, $\mathrm{tr}(P_1)$, $\mathrm{sides}(P_2)$ and $\mathrm{tr}(P_2)$\;
	$\mathrm{FirLev}(P_1)\leftarrow [f_0(P_1), \mathrm{bound}(P_1), \mathrm{area}(P_1)]$; $\mathrm{SecLev}(P_1)\leftarrow [\mathrm{sides}(P_1), \mathrm{tr}(P_1)]$\;
	$\mathrm{FirLev}(P_2)\leftarrow [f_0(P_2), \mathrm{bound}(P_2), \mathrm{area}(P_2)]$; $\mathrm{SecLev}(P_2)\leftarrow [\mathrm{sides}(P_2),\mathrm{tr}(P_2)]$\;
	\uIf{$\mathrm{FirLev}(P_1)\neq\mathrm{FirLev}(P_2)$}{\KwOut{0}}
	\uElseIf{$\mathrm{SecLev}(P_1)==\mathrm{SecLev}(P_2)$}{\KwOut{1}}
	\Else{\KwOut{0}}
\end{algorithm}

\begin{algorithm}
	\caption{EQ algorithm}\label{EQ}
	\KwIn{two convex lattice polygons $P_1$ and $P_2$}
	\KwOut{a logical variable indicating whether there is a unimodular transformation between $P_1$ and $P_2$}
	Call $\mathrm{tr}(P_1)$ and $\mathrm{tr}(P_2)$ to solve out $v^{(1)}_i$, $i=1,\cdots,n_1$ and $v^{(2)}_i$, $i=1,\cdots,n_2$\; 
	$\mathrm{AlterVert}_1\leftarrow v^{(1)}_1$\;
	Arrange $\mathrm{vert}(P_1)$ counterclockwise and let $\mathrm{AlterVert}_1$ be the first vertex in $\mathrm{vert}(P_1)$\;
	Each vertex in $\mathrm{vert}(P_1)$ minus $\mathrm{AlterVert}_1$, the resulting sequence is still denoted by $\mathrm{vert}(P_1)$\;
	\For{$i\leftarrow 1$ \KwTo $n_2$}{$\mathrm{AlterVert}_2\leftarrow v^{(2)}_i$\;
		Arrange $\mathrm{vert}(P_2)$ counterclockwise and let $\mathrm{AlterVert}_2$ be the first vertex in $\mathrm{vert}(P_2)$\;
		Each vertex in $\mathrm{vert}(P_2)$ minus $\mathrm{AlterVert}_2$, the resulting sequence is still denoted by $\mathrm{vert}(P_2)$\;
		$\mathrm{point^{1}_1}\leftarrow\mathrm{vert}(P_1)[n_1]$; $\mathrm{point^{1}_2}\leftarrow\mathrm{vert}(P_1)[2]$\;
		$\mathrm{point^{2}_1}\leftarrow\mathrm{vert}(P_2)[n_2]$; $\mathrm{point^{2}_2}\leftarrow\mathrm{vert}(P_2)[2]$\;
		$\mathrm{unimomatrix}_1\leftarrow$ the unimodular matrix mapping $\mathrm{point^{1}_1}$ and $\mathrm{point^{1}_2}$ to $\mathrm{point^{2}_1}$ and $\mathrm{point^{2}_2}$ respectively\;
		$\mathrm{unimomatrix}_2\leftarrow$ the unimodular matrix mapping $\mathrm{point^{1}_1}$ and $\mathrm{point^{1}_2}$ to $\mathrm{point^{2}_2}$ and $\mathrm{point^{2}_1}$ respectively\;
		$\mathrm{vert}(P_2)_F\leftarrow$ flip $\mathrm{vert}(P_2)$ head to tail so that $\mathrm{AlterVert}_2$ is the last vertex\;
		\uIf{$(\mathrm{unimomatrix}_1\ is\ an\ integer\ matrix) \& (\mathrm{unimomatrix}_1\ maps\ \mathrm{vert}(P_1)\ to\ \mathrm{vert}(P_2))$}{\KwOut{1}}
		\uElseIf{$(\mathrm{unimomatrix}_2\ is\ an\ integer\ matrix) \& (\mathrm{unimomatrix}_2\ maps\ \mathrm{vert}(P_1)\ to\ \mathrm{vert}(P_2)_F)$}{\KwOut{1}}
		\Else{\KwOut{0}}
	}
\end{algorithm}

\section{Results of experiments}
We use the algorithm proposed in the previous section to solve out the representative elements of all convex lattice polygons in $B_2$, $B_3$ and $B_4$ respectively. The numbers of different classes of convex lattice polygons which have different cardinalities in $B_2$, $B_3$ and $B_4$ are shown in Table \ref{tableb23} and \ref{tableb4}.

Let $R_w$ be the number of all representative elements with $|P|=w$ and contained in $B_R$, then $\{R_w:R_w\neq 0\}$ in the case of $R=2$, 3 and 4 are listed in the Table \ref{tableb23} and \ref{tableb4}. The data in Table \ref{tableb23} (only in the case of $B_3$) and Table \ref{tableb4} are plotted as scatter plots, i.e., Figure \ref{figtab3} and Figure \ref{figtab4}. $R_w$ is clearly a function of $w$ when $R$ is fixed. From these two figures, we can see that when $R=3$ and $4$, $R_w$ increases and then decreases as $w$ increases. It is natural to ask if this trend of $R_w$ is true for any other $R$. More generally, we can propose the following problem.


\begin{prob}\label{rwofw}
    Fixing $R$ and considering $R_w$ as a function of $w$, find an approximate function that can fit $R_w$ well when $R$ is large.
\end{prob}

We can also propose the dual problem of Problem \ref{rwofw} as follows, which can help us to understand the extent to which representative elements with $w$ lattice points can be concentrated in discs with different radii. That is exactly the value of Problem \ref{rwofw} and Problem \ref{rwofr}.

\begin{prob}\label{rwofr}
    Fixing $w$ and considering $R_w$ as a function of $R$, find an approximate function that can fit $R_w$ well when $w$ is large.
\end{prob}

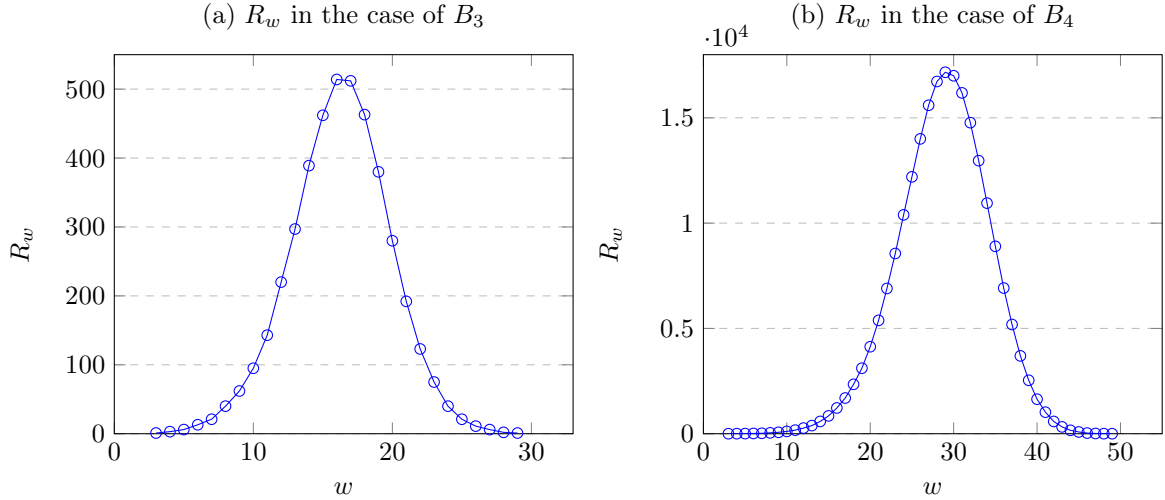
\begin{figure}
	\centering
	\subfigure{
        \label{figtab3}
	\begin{tikzpicture}
		\begin{axis}[
			title={(a) $R_w$ in the case of $B_3$},
			xlabel={$w$},
			ylabel={$R_w$},
			xmin=0, xmax=33,
			ymin=0, ymax=550,
			ymajorgrids=true,
			grid style=dashed,
			width=0.48\textwidth,
			]
			
			\addplot[
			color=blue,
			mark=o,
			]
			coordinates {
				(3,1)
				(4,3)
				(5,6)
				(6,13)
				(7,21)
				(8,40)
				(9,62)
				(10,95)
				(11,143)
				(12,220)
				(13,297)
				(14,389)
				(15,462)
				(16,514)
				(17,512)
				(18,463)
				(19,380)
				(20,280)
				(21,192)
				(22,123)
				(23,75)
				(24,40)
				(25,21)
				(26,11)
				(27,6)
				(28,2)
				(29,1)
			};
		\end{axis}
	\end{tikzpicture}}
	\subfigure{
        \label{figtab4}
	\begin{tikzpicture}
		\begin{axis}[
			title={(b) $R_w$ in the case of $B_4$},
			xlabel={$w$},
			ylabel={$R_w$},
			xmin=0, xmax=55,
			ymin=0, ymax=18000,
			ymajorgrids=true,
			grid style=dashed,
			width=0.48\textwidth,
			]
			
			\addplot[
			color=blue,
			mark=o,
			]
			coordinates {
				(3,1)
				(4,3)
				(5,6)
				(6,13)
				(7,21)
				(8,41)
				(9,67)
				(10,110)
				(11,170)
				(12,268)
				(13,386)
				(14,584)
				(15,846)
				(16,1223)
				(17,1695)
				(18,2346)
				(19,3111)
				(20,4132)
				(21,5383)
				(22,6898)
				(23,8558)
				(24,10392)
				(25,12198)
				(26,14001)
				(27,15598)
				(28,16726)
				(29,17165)
				(30,16998)
				(31,16185)
				(32,14771)
				(33,12967)
				(34,10950)
				(35,8899)
				(36,6918)
				(37,5186)
				(38,3696)
				(39,2537)
				(40,1640)
				(41,1023)
				(42,583)
				(43,324)
				(44,162)
				(45,83)
				(46,31)
				(47,12)
				(48,2)
				(49,1)
			};
		\end{axis}
	\end{tikzpicture}}
        \caption{Scatter plots of $R_w$ in the case of $B_3$ and $B_4$}
\end{figure}


\begin{table}[t]
	\centering
	\caption{The number of non-equivalent convex lattice polygons with different  cardinality in $B_2$ and $B_3$.}
	\label{tableb23}
	
 \begin{tabular}{|c|c|c|c|c|c|c|c|}
		\hline
		\multicolumn{3}{|c|}{$R$ (Radius)=2} & \multicolumn{5}{c|}{$R$ (Radius)=3}\\
		\hline
		$w$ (Cardinality) & $R_w$ & Sum of $R_w$ & $w$ (Cardinality) & $R_w$ & $w$ (Cardinality) & $R_w$ & Sum of $R_w$\\
		\hline
		3 & 1 & 75 & 3 & 1 & 17 & 512 & 4372\\
		\hline
		4 & 3 &    & 4 & 3 & 18 & 463 & \\
		\hline
		5 & 6 &    & 5 & 6 & 19 & 380 & \\
		\hline
		6 & 11 &    & 6 & 13 & 20 & 280 & \\
		\hline
		7 & 15 &    & 7 & 21 & 21 & 192 & \\
		\hline
		8 & 16 &    & 8 & 40 & 22 & 123 & \\
		\hline
		9 & 12 &    & 9 & 62 & 23 & 75 & \\
		\hline
		10 & 6 &    & 10 & 95 & 24 & 40 & \\
		\hline
		11 & 3 &    & 11 & 143 & 25 & 21 & \\
		\hline
		12 & 1 &    & 12 & 220 & 26 & 11 & \\
		\hline
		13 & 1 &    & 13 & 297 & 27 & 6 & \\
		\hline
		 &  &    & 14 & 389 & 28 & 2 & \\
		\hline
		 &  &    & 15 & 462 & 29 & 1 & \\
		\hline
		 &  &    & 16 & 514 &  &  & \\
		\hline
	\end{tabular}
\end{table}

\begin{table}[t]
	\centering
	\caption{The number of non-equivalent convex lattice polygons with different  cardinality in $B_4$.}
	\label{tableb4}
  \begin{tabular}{|c|c|c|c|c|c|c|c|c|}
		\hline
		\multicolumn{8}{|c|}{$R$ (Radius)=4}\\
		\hline
		$w$ (Cardinality) & $R_w$ & $w$ (Cardinality) & $R_w$ & $w$ (Cardinality) & $R_w$ & $w$ (Cardinality) & $R_w$\\
		\hline
		3 & 1 & 15 & 846 & 27 & 15589 & 39 & 2537\\
		\hline
		4 & 3 & 16 & 1223 & 28 & 16726 & 40 & 1640\\
		\hline
		5 & 6 & 17 & 1695 & 29 & 17165 & 41 & 1023\\
		\hline	
		6 & 13 & 18 & 2346 & 30 & 16998 & 42 & 583\\
		\hline
		7 & 21 & 19 & 3111 & 31 & 16185 & 43 & 324\\
		\hline
		8 & 41 & 20 & 4132 & 32 & 14771 & 44 & 162\\
		\hline
		9 & 67 & 21 & 5383 & 33 & 12967 & 45 & 83\\
		\hline
		10 & 110 & 22 & 6898 & 34 & 10950 & 46 & 31\\
		\hline
		11 & 170 & 23 & 8558 & 35 & 8899 & 47 & 12\\
		\hline
		12 & 268 & 24 & 10392 & 36 & 6918 & 48 & 2\\
		\hline
		13 & 386 & 25 & 12198 & 37 & 5186 & 49 & 1\\
		\hline
		14 & 584 & 26 & 14001 & 38 & 3696 & & \\	
		\hline
        \multicolumn{8}{|c|}{Sum of $R_w$=224901}\\
        \hline
	\end{tabular}
\end{table}

\section{Complexity Analysis of the Algorithms}
In this section we establish upper bounds on the number of arithmetic operations performed by Algorithm \ref{themainalg}, \ref{INVAR} and \ref{EQ}. Given an $R$ and the corresponding $B_R$, let $W$ be the number of lattice points contained in $B_R$ and  $M$ be the maximum number of vertices of convex lattice polygons contained in $B_R$. By (\ref{dingdianshang}) and \cite{balog}, we have
\begin{equation}\label{WandM}
	W\leqslant d_1 R^2,\quad M\leqslant d_2 R^{2/3},
\end{equation}
for some suitable constants $d_1$ and $d_2$. Let $N$ be the number of different classes of all  convex lattice polygons in $B_R$.

Firstly, we analyse the number of arithmetic operations performed by the function CALinv in Algorithm \ref{themainalg} that calculates the value of each invariant on a convex lattice polygon contained in $B_R$ as follows:
\begin{enumerate}
	\item As previously mentioned, for any convex lattice polygon, we use a MATLAB command \textsc{convhull} and take all lattice points contained in the polygon as input to obtain vertices and lattice points on the boundary of the polygon. By \cite{Hert}, the number of arithmetic operations performed by \textsc{convhull} to solve the convex hull of $n$ points is $O(n\log n)$. For any convex lattice polygon in $B_R$, the number of lattice points contained in it must be less than or equal to $W$. So the number of arithmetic operations required to obtain vertices and lattice points on the boundary of a convex lattice polygon in $B_R$ is $O(W\log W)$.
	\item The method used in our algorithm to calculate the area of a convex lattice polygon is as follows. Join one vertex of a polygon to the other vertices to obtain a kind of triangulation of the polygon, and then the area of the polygon $P$ is obtained by calculating the area of each small triangle and adding them together. The number of small triangles whose area is needed to be calculated is at most $|\mathrm{vert}(P)|-2$. Then the number of arithmetic operations needed to calculate the area of a convex lattice polygon is $O(M)$.
	\item Let $P$ be a convex lattice polygon in $B_R$. The number of arithmetic operations required to calculate the vector $(s_1(P),\cdots,s_{f_0(P)}(P))'$ is $O(M)$, where $f_0(P)$ is the number of vertices of $P$. This is because $P$ has at most $M$ sides. Then sorting all components of this vector in ascending order requires at most $O(M\log M)$ arithmetic operations. So it takes at most $O(M\log M)$ arithmetic operations to calculate the value of the fourth invariant on a given polygon $P$. The sorted vector is still denoted by $(s_1(P),\cdots,s_{f_0(P)}(P))'$.
	\item Let $P$ be a convex lattice polygon in $B_R$. The number of arithmetic operations required to calculate the vector $(tr_1(P),\cdots,tr_{f_0(P)}(P))'$ is $O(M)$, because $P$ has at most $M$ adjacent triangles. Similar to the analysis of the fourth invariant, taking into account the time complexity of sorting, it also takes at most $O(M\log M)$ arithmetic operations to calculate the value of the fifth invariant on a given polygon $P$. The sorted vector is still denoted by $(tr_1(P),\cdots,tr_{f_0(P)}(P))'$.
\end{enumerate}

In summary, the number of arithmetic operations needed to calculate the values of all invariants on a convex lattice polygon is $O(W \log W) + O(M\log M) = O(W \log W)$, that is, the function CALinv requires $O(W \log W)$ arithmetic operations.

Secondly, the Algorithm \ref{INVAR} only compares invariants instead of calculating them. Therefore, Algorithm \ref{INVAR} requires $O(M)$ arithmetic operations, because if the values of invariants are put together to form a new vector $(f_{0}(P),\mathrm{bound}(P),\mathrm{area}(P),s_1(P),\cdots,s_{f_0(P)}(P),tr_1(P),\cdots,tr_{f_0(P)}(P))$, then the length of the vector does not exceed $2M+3$.

Thirdly, we analyse the Algorithm \ref{EQ}, which aims to determine if there exists a unimodular transformation that maps $P_1$ to $P_2$. Since the function CALinv has already calculated $\mathrm{tr}(P_{1})$ and $\mathrm{tr}(P_{2})$, we can access them directly. After that, as previously mentioned process,  we fix a vertex of $P_1$, denoted by $v_{1}^{(1)}$, which, with its adjacent vertices, forms an adjacent triangle with minimal area $s_{1}$. For each vertex of $P_2$ whose adjacent triangle has the same area as $s_{1}$, it is possible to be the image of $v_{1}^{(1)}$. We translate $v_{1}^{(1)}$ and its possible images to origin. Then for each possible image of $v_{1}^{(1)}$, we verify whether there is a unimodular matrix between the adjacent edge vectors of $v_{1}^{(1)}$ and the adjacent edge vectors of the possible image, of which the number of arithmetic operations is $O(1)$. Next, we need to examine whether the unimodular matrix can map the other vertices of $P_1$ to those of $P_2$, where the process requires $O(M)$ arithmetic operations. This is because $P_1$ and $P_2$ have at most $M$ vertices. Besides, $v_{1}^{(1)}$ has at most $M$ possible images, we conclude that Algorithm \ref{EQ} requires $O(M\cdot M)=O(M^2)$ arithmetic operations.

Finally, we analyse the main algorithm. During the $i$th iteration of the loop that commences at line 4 in Algorithm \ref{themainalg} (hereafter denoted as the main loop), $\mathcal{P}^{W-i}$ is derived from $\mathcal{P}^{W-i+1}$, and the process can be divided into two parts. The first part involves shaving each vertex of each polygon in $\mathcal{P}^{W-i+1}$ in turn, resulting in a set of convex lattice polygons, denoted by $G^{W-i}$, in which each polygon contains $W-i$ lattice points. Then we calculate the invariants' values for each polygon obtained through shaving. Let 
\begin{equation*}
	\left|\mathcal{P}^{W-i+1}\right|=n_i,\quad \left|G^{W-i}\right|=n_{i+1}',
\end{equation*}
then it is easy to see
\begin{equation}
	n_{i+1}'\leqslant Mn_i.
\end{equation}
In addition, the number of times of shaving during the $i$th iteration of the main loop is also at most $Mn_i$.

The second part of the $i$th iteration of the main loop is to determine whether every two polygons in $G ^ {W-i} $ are equivalent. By doing so and retaining only one polygon for each equivalent class, we can obtain $\mathcal{P}^{W-i}$. It is evident that there are at most $C^{2}_{n_{i+1}'}$ determinations that will be made, where $C^{2}_{n_{i+1}'}$ is a combinatorial number.

Now we analyse these two parts respectively. In the first part, shaving a vertex from a convex lattice polygon requires at most $O(W)$ operations. This is because a convex lattice polygon is represented by the sequence of coordinates of all lattice points contained in it. Therefore, before shaving a vertex, we need to search for this vertex in the sequence of all lattice points contained in the polygon, which requires at most $W$ times. Recalling the number of arithmetic operations of the function CALinv in Algorithm \ref{themainalg} is $O(W\log W)$, the number of arithmetic operations performed by the first part is
\begin{equation}
	O(WMn_i +W \log Wn_{i+1}')=O((W+W\log W)Mn_i)=O(W \log W Mn_i).
\end{equation}


In the second part, we need to compare the values of the invariants on every two polygons in $G^{W-i}$. After that, we need to find the unimodular transformations between every two polygons in $G^{W-i}$ if they exist. Recalling Algorithm \ref{INVAR} and Algorithm \ref{EQ} require $O(M)$ and $O(M^{2})$ arithmetic operations respectively, the number of arithmetic operations performed by the second part is
\begin{equation}
	\begin{aligned}
		&O(C^{2}_{n_{i+1}'}(M^2 + M))\\
		=&O(n_{i+1}^{\prime2}(M^2 + M))\\
		=&O((M^2n^2_i)(M^2 + M))\\
		=&O((M^2n^2_i)M^2)\\
        =&O(M^4n_i^2).
	\end{aligned}
\end{equation}

Above all, the total number of arithmetic operations during the $i$th iteration of the main loop is 
\begin{equation}
	O(W \log WMn_i)+O(M^4n_i^2).
\end{equation}
Then the number of arithmetic operations performed by the main algorithm is 
\begin{equation}\label{complexityMWN}
	\begin{aligned}
		&O\left[\sum_{i=3}^W\left(W\log WMn_i+M^4n_i^2\right)\right]\\
		=&O(W\log W MN+M^4\sum_{i=3}^W n_i^2)\\
		=&O(W\log W MN+M^4N^2)\\
        =&O(M^4N^2).
	\end{aligned}
\end{equation}
By (\ref{shangjie}) and (\ref{WandM}), (\ref{complexityMWN}) can be written as 
\begin{equation}\label{complexityR}
	O(R^{8/3}2^{cR^{2/3}}).
\end{equation}

From (\ref{complexityMWN}) it can be seen that the complexity of our algorithm is a polynomial function in $M$ and $N$, and the degree of $N$ is only two, which is relatively low. This suggests that our algorithm is effective. However, Theorem \ref{bound} and (\ref{complexityR}) indicate that the problem of finding all non-equivalent convex lattice polygons in $B_R$ might be an NP-complete or NP-hard problem.

Furthermore, let $B^n_R$ be the $n$-dimensional closed ball of radius $R$ and centered at the origin of $\mathbb{E}^n$. By replacing $B_R$ in the above enumeration problem with $B^n_R$, we can get a more complex problem, which also has the potential to be an NP-complete or NP-hard problem. Such lattice-based problems with high complexity might be very promotive in the field of lattice cryptography and worth exploring.

\clearpage

\end{document}